\tikzstyle{vertex}=[auto=left,circle,draw=black,fill=white, inner sep=1.5]
\newtheorem{theorem}{Theorem}[section]
\newtheorem{lema}[theorem]{Lemma}
\renewcommand{\i}{\mathbf{i}}
\newcommand{\Cay}{\operatorname{Cay}}
\title{A formula for eigenvalues of integral Cayley graphs over abelian groups}
\author[1]{Priya}
\author[2]{Monu Kadyan}
\affil[ ]{\small{\textsuperscript{1}Department of Mathematics, Indian Institute of Science Education and Research Bhopal, India.}}
\affil[ ]{\textsuperscript{2}Yau Mathematical Sciences Center, Tsinghua University, China.}
\affil[ ]{ {\textsuperscript{1}priya22@iiserb.ac.in, \textsuperscript{2}monu4math@gmail.com}}
\date{}
\begin{document}
	\maketitle
	
	\vspace{-0.3in}
	
\begin{center}{\textbf{Abstract}}\end{center} 

Let $Z$ be an abelian group, $ x \in Z$, and $[x] = \{ y : \langle x \rangle = \langle y \rangle \}$. A graph is called integral if all its eigenvalues are integers. It is known that a Cayley graph is integral if and only if its connection set can be express as union of the sets $[x] $. In this paper, we determine an algebraic formula for eigenvalues of the integral Cayley graph when the connection set is $ [x]$. This formula involves an analogue of M$\ddot{\text{o}}$bius function.

%Let $Z=\mathbb{Z}_{n_1} \otimes \mathbb{Z}_{n_2} \otimes \cdots \otimes \mathbb{Z}_{n_k}$ be an abelian group and $ \alpha  \in Z$. Define $\psi_{\alpha}: Z \to \mathbb{C}^*$ such that \( \psi_{\alpha}(y)=\prod\limits_{j=1}^{k} \exp\left(\frac{2\pi \textbf{i}~ \alpha_j ~ y_j }{n_j}\right) \textnormal{ for all $y  \in Z$} \). For $\alpha, x\in Z$, define $[x] = \{ y : \langle x \rangle = \langle y \rangle \}$ and $C_{\alpha}(x) = \sum\limits_{s \in [x]} \psi_{\alpha}(s)$. It is known that eigenvalues of the integral Cayley graph $\text{Cay}(Z, S)$ can be express in the sum of $C_{\alpha}(x)$'s. In this paper, we discuss a formula for the $C_{\alpha}(x)$.  The formula is as follows:
%\begin{align}
%C_{\alpha}(x) = \left\{ \begin{array}{cl}
%		|[x]| & \mbox{if }\langle \bar{\alpha} \rangle = \langle x \rangle  \\
%		(-1)^r ~ |[x]| ~  \prod\limits_{i=1}^{r}   \frac {|M_i|}{| \langle \bar{\alpha} \rangle | - |M_i|}     & \mbox{if } \langle \bar{\alpha} \rangle = \bigcap\limits_{j=1}^{r} M_j  \\
%		0   &\mbox{otherwise} 
%	\end{array}\right. \nonumber
%\end{align} where $\bar{\alpha}$ is a generator of the cyclic group $ \{ y : \psi_{\alpha}(y) =1 \} \cap \langle x \rangle$ and $M_1, M_2, \ldots , M_r$ are some maximal subgroups of $\langle x \rangle $.
%\noindent 

\vspace*{0.3cm}
\noindent 
\textbf{Keywords.} Eigenvalue of Cayley graph, Character of abelian group \\
\textbf{Mathematics Subject Classifications:} 05C50, 20C15.

\section{Introduction}

Eigenvalues of Cayley graphs gained popularity due to their significance in algebraic graph theory and applications in expanders, chemical graph theory, and quantum computing. A large number of results on eigenvalues of Cayley graphs was made in the last few decades, see \cite{liu2022eigenvalues} for a survey. Many algebraic methods, including representation theory and association schemes can be used to study the eigenvalue of Cayley graphs. In this paper, we use an analogue of M$\ddot{\text{o}}$bius inversion formula to construct an algebraic formula for the eigenvalues of integral Cayley graphs.

Throughout this paper, we consider $Z $ to be an abelian group of order $n$. For a positive integer $r$ and $x\in Z$, we write $rx$ to denote the $r$ times sum of $x$ to itself, additive inverse of $x$ by $-x$. We use  $\langle x \rangle$ to denote the cyclic group generated by $x$.

Let $S$ be a close under inverse subset of $Z$. The \textit{Cayley graph} ${\rm Cay}(Z,S)$ is an undirected graph, where $V({\rm Cay}(Z,S))=Z$ and $$E({\rm Cay}(Z,S))=\{ (a,b)\colon a,b\in Z, b-a \in S \}.$$
The \textit{eigenvalues} of the graph are the eigenvalues of its $(0,1)$-adjacency matrix. A graph is called \textit{integral} if all its eigenvalues are integers.

Consider the equivalence relation $\sim$ on the abelian group $Z$ such that $x\sim y$ if and only if $y=mx$ for some integer $m$ such that $m$ is co-prime to the order of $x$. For any $x\in Z$, we use $[x]$ to denote the equivalence class of $x$ with respect to the relation $\sim$. That is, $[x] = \{ y : \langle x \rangle = \langle y \rangle \}$. Define $\mathcal{A}(x)$ to be a collection of the representatives of the equivalence classes of $\sim$ in the cyclic group $\langle x \rangle$. For example, if $Z=\mathbb{Z}_n$ and $x=1$, then $\mathcal{A}(1)=\{ d: d \mbox{ divides } n \}$.

In 1982, Bridge and Meena gave a characterisation of a Cayley graph whose all eigenvalues are integers. This result is as follows.

%\cite{alperin2012integral},
\begin{theorem}\label{Cayint} (\cite{bridges1982rational})
Let $Z$ be an abelian group and $S$ be a close under inverse subset of $Z$. The Cayley graph $\text{Cay}(Z, S)$ is integral if and only if $S=[x_1]\cup [x_2]\cup \ldots \cup [x_r]$ for some $x_1, x_2, \ldots , x_r \in S$.
\end{theorem}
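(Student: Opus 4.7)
The plan rests on the classical formula $\lambda_\chi=\sum_{s\in S}\chi(s)$ for the eigenvalues of $\Cay(Z,S)$, where $\chi$ runs over the character group of $Z$. Let $n$ be the exponent of $Z$, so that every $\chi(s)$ is an $n$-th root of unity and $\lambda_\chi\in\mathbb{Q}(\zeta_n)$. Since each $\lambda_\chi$ is a sum of roots of unity, hence an algebraic integer, integrality of all $\lambda_\chi$ is equivalent to each $\lambda_\chi$ being fixed by $\Gal(\mathbb{Q}(\zeta_n)/\mathbb{Q})$.

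For sufficiency ($\Leftarrow$), write $S=[x_1]\cup\cdots\cup[x_r]$; I would show each sub-sum $\sum_{y\in[x_i]}\chi(y)$ is a rational integer. Setting $\zeta=\chi(x_i)$ and $n_i=\ord(x_i)$, this sub-sum equals $\sum_{m\in(\mathbb{Z}/n_i\mathbb{Z})^\times}\zeta^m$. For any $a$ coprime to $n$ (hence to $n_i$, since $n_i\mid n$), the Galois automorphism $\sigma_a:\zeta_n\mapsto\zeta_n^a$ acts on this sum by $m\mapsto am$, which is a bijection of $(\mathbb{Z}/n_i\mathbb{Z})^\times$. Hence each sub-sum is Galois-fixed, therefore a rational integer, and $\lambda_\chi\in\mathbb{Z}$.

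For necessity ($\Rightarrow$), assume every $\lambda_\chi\in\mathbb{Z}$. The automorphism $\sigma_a:\zeta_n\mapsto\zeta_n^a$ with $\gcd(a,n)=1$ sends $\chi(s)$ to $\chi(as)$, so $\sigma_a(\lambda_\chi)=\sum_{s\in S}\chi(as)=\sum_{t\in aS}\chi(t)$, which must equal $\lambda_\chi$ by integrality. Thus the character transforms of $\mathbf{1}_S$ and $\mathbf{1}_{aS}$ agree for every $\chi$; since characters form a basis for complex-valued functions on $Z$, Fourier inversion gives $\mathbf{1}_S=\mathbf{1}_{aS}$, i.e.\ $S=aS$ for every $a$ coprime to $n$. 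Now fix $s\in S$ and any $y=m's\in[s]$ with $\gcd(m',\ord(s))=1$; the Chinese Remainder Theorem produces $a\equiv m'\pmod{\ord(s)}$ with $\gcd(a,n)=1$ (for primes $p\mid\ord(s)$, coprimality follows automatically from $\gcd(m',\ord(s))=1$; for primes $p\mid n$ with $p\nmid\ord(s)$, freely arrange $a\not\equiv 0\pmod{p}$). Then $y=as\in aS=S$, so $[s]\subseteq S$, and $S$ is the union of the classes it contains.

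The main obstacle I anticipate is the Galois/CRT bookkeeping: verifying that every residue in $(\mathbb{Z}/\ord(x_i)\mathbb{Z})^\times$ is realised by the restriction of some $\sigma_a$ with $\gcd(a,n)=1$, and producing the CRT lift above. With those in hand, both directions fall out cleanly from the character-sum formula together with injectivity of the Fourier transform on the finite abelian group $Z$.
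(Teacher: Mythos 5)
The paper does not actually prove this statement --- it is quoted from Bridges and Mena \cite{bridges1982rational} without proof --- so there is no in-paper argument to compare against. Your proposal is correct, and it is essentially the standard Galois-theoretic proof of this characterisation. Both directions check out: for sufficiency, $[x_i]=\{mx_i:m\in(\mathbb{Z}/n_i\mathbb{Z})^\times\}$ with all elements distinct, so the sub-sum is $\sum_{m\in(\mathbb{Z}/n_i\mathbb{Z})^\times}\zeta^m$ with $\zeta=\chi(x_i)$ an $n_i$-th root of unity, and since $n_i\mid n$ every $\sigma_a$ with $\gcd(a,n)=1$ permutes the exponents, so the sum is a Galois-fixed algebraic integer, hence in $\mathbb{Z}$. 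For necessity, the one point you should make explicit is that multiplication by $a$ with $\gcd(a,n)=1$ ($n$ the exponent) is a bijection of $Z$, so that $\sum_{s\in S}\chi(as)=\sum_{t\in aS}\chi(t)$ is a sum over a set of the same size and Fourier inversion legitimately yields $S=aS$; your CRT lift of $m'$ modulo $\ord(s)$ to a unit modulo $n$ is then exactly what is needed to conclude $[s]\subseteq S$. It is worth noting how this relates to what the paper does prove: the sufficiency direction is subsumed (and sharpened) by Theorem~\ref{MainResultProof}, which gives an exact integer formula $C_\alpha(x)=\mu(\bar\alpha,x)\,|[x]|/\varphi(\bar\alpha,x)$ for each sub-sum rather than mere integrality, while the necessity direction genuinely requires the Galois/CRT argument you supply and is not covered by the paper's M\"obius-inversion machinery.
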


\noindent Consider $Z=\mathbb{Z}_{n_1} \times \mathbb{Z}_{n_2} \times \cdots \times \mathbb{Z}_{n_k}$. For $\alpha \in Z$, define a character $\psi_{\alpha}: Z \to \mathbb{C}^*$ such that
\begin{equation}
\psi_{\alpha}(y)=\prod_{j=1}^{k}\omega_{n_j}^{\alpha_j y_j} \textnormal{ for all $\alpha=( \alpha_1,...,\alpha_k),~y=(y_1, \ldots ,y_k) \in Z$},\nonumber \label{character}
\end{equation}
and $\omega_{n_j}=\exp\left(\frac{2\pi \i}{n_j}\right)$. Note that $\{ \psi_{\alpha}: \alpha \in Z \}$ is the complete set of non-equivalent irreducible characters of the abelian group $Z$. For any $\alpha , x \in Z$, define 
\begin{align}
C_{\alpha}(x):= \sum_{s \in [x]} \psi_{\alpha}(s).\nonumber
\end{align}

\noindent Consider $Z=\mathbb{Z}_n$ and $x=1$. We have $$C_{\alpha}(1)= \sum_{ \substack{ 1\leq s \leq n\\ \gcd(s,n)=1} } \omega_n^{s\alpha} = R_n(\alpha),$$ where $R_n(\alpha)$ is the Ramanujan sum. It was introduced by Ramanujan in 1918. For more information, see \cite{ramanujan1918certain}. It is known that $$R_n(\alpha)= \varphi(n) \frac{\dot{\mu}({\frac{n}{\delta_{\alpha}}})}{\varphi(\frac{n}{\delta_{\alpha}})},$$ where $\delta_{\alpha} = \gcd(n,\alpha)$, $\dot{\mu}$ is classic M$\ddot{\text{o}}$bius function, and $\varphi$ is Euler's phi function. Hence, we have a formula for $C_{\alpha}(1)$. Therefore,  it is natural to ask whether a similar formula can be obtained for $C_{\alpha}(x)$ in which the group $Z$ is non-cyclic. In this paper, we answer it in an affirmative form. In 1979, Babai used characters to determine the eigenvalues of a Cayley graph. This result is as follows.

%foster2016spectra
\begin{theorem}[\cite{babai1979spectra}] \label{EigenvalueExpression}
Let $Z$ be an abelian group and $S$ be a close under inverse subset of $Z$. The eigenvalue of Cayley graph $\Cay(Z, S)$ is $\{ \lambda_\alpha \colon \alpha \in Z \}$, where $$\lambda_{\alpha}=\sum_{s\in S} \psi_{\alpha}(s)  \textnormal{ for each } \alpha \in Z.$$
\end{theorem}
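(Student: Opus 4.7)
The plan is to exhibit an explicit eigenbasis of the adjacency matrix $A$ of $\Cay(Z,S)$ consisting of the character vectors $v_\alpha := (\psi_\alpha(z))_{z \in Z} \in \mathbb{C}^{|Z|}$, indexed by $\alpha \in Z$. This is the standard route: once we know the characters diagonalize $A$, the corresponding eigenvalues must be exactly the numbers $\lambda_\alpha = \sum_{s\in S}\psi_\alpha(s)$.

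First, I would fix $\alpha \in Z$ and compute the $z$-th coordinate of $Av_\alpha$. By the definition of $\Cay(Z,S)$, the entry $A_{z,w}$ is $1$ exactly when $w - z \in S$, so
\[
(Av_\alpha)_z = \sum_{w \in Z,\, w-z \in S} \psi_\alpha(w) = \sum_{s \in S} \psi_\alpha(z+s).
\]
Since $\psi_\alpha$ is a character, $\psi_\alpha(z+s) = \psi_\alpha(z)\psi_\alpha(s)$, whence $(Av_\alpha)_z = \psi_\alpha(z)\sum_{s\in S}\psi_\alpha(s) = \lambda_\alpha\, v_\alpha(z)$. Thus $v_\alpha$ is an eigenvector of $A$ with eigenvalue $\lambda_\alpha$. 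The hypothesis that $S$ is closed under inverses guarantees $A$ is real symmetric and, dually, that each $\lambda_\alpha$ is real (one can pair up $\psi_\alpha(s)$ with $\psi_\alpha(-s) = \overline{\psi_\alpha(s)}$), though this is not strictly needed for the statement.

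Next I would show that $\{v_\alpha : \alpha \in Z\}$ is a basis of $\mathbb{C}^{|Z|}$. Since $Z$ is a finite abelian group of order $n$, it admits exactly $n$ distinct irreducible characters $\{\psi_\alpha : \alpha \in Z\}$, and the orthogonality relations give
\[
\langle v_\alpha, v_\beta\rangle = \sum_{z \in Z} \psi_\alpha(z)\overline{\psi_\beta(z)} = n\,\delta_{\alpha,\beta}.
\]
Hence the $v_\alpha$ are pairwise orthogonal and nonzero, so they form an orthogonal basis of $\mathbb{C}^{|Z|}$. Consequently the multiset $\{\lambda_\alpha : \alpha \in Z\}$ accounts for all $n$ eigenvalues of $A$ (with multiplicity), which is the claim.

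There is no real obstacle here: the only delicate point is bookkeeping—making sure the indexing of the character vectors $v_\alpha$ matches the row/column ordering of $A$, and invoking the full orthogonality relations to deduce linear independence rather than merely producing $n$ eigenvector candidates. Everything else is a one-line character computation.
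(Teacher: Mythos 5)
Your argument is correct: the paper gives no proof of this statement, citing it to Babai, and your character-vector computation $(Av_\alpha)_z=\psi_\alpha(z)\sum_{s\in S}\psi_\alpha(s)$ together with the orthogonality relations is exactly the standard proof behind that citation. Nothing is missing; the closure of $S$ under inverses is, as you note, only needed to make $A$ symmetric and the $\lambda_\alpha$ real, not for the eigenvalue formula itself.
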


\noindent By Theorem~\ref{Cayint} and Theorem~\ref{EigenvalueExpression}, it is clear that $C_{\alpha}(x)$ is integer and also an eigenvalue of the integral Cayley graph $\Cay(Z, [x])$. Thus, eigenvalues of an integral Cayley graph $\Cay(Z, S)$ can be express as
\begin{align}
\lambda_{\alpha}=\sum_{i=1}^{r} \sum_{s\in [x_i]} \psi_{\alpha}(s) \label{EigIntegCayGra}
\end{align} for some $x_1, x_2, \ldots , x_r \in S$. By Equation~(\ref{EigIntegCayGra}), we get
\begin{align}
\lambda_{\alpha}=\sum_{i=1}^{r}  C_{\alpha}(x_i)~ \textnormal{ for each } \alpha \in Z. \nonumber
\end{align} Hence the sum $C_{\alpha}(x)$ plays important role in eigenvalues of an integral Cayley graph. 

The classic M$\ddot{\text{o}}$bius function is a multiplicative function with the domain of positive integers. It was introduced by August Ferdinand M$\ddot{\text{o}}$bius in 1832. This paper discusses a similar function, taking the domain as a pair of the elements of abelian groups. We call it an analogue of M$\ddot{\text{o}}$bius function. This analogue is the following. For $x,y \in Z$ and $x \in \langle y \rangle$,  define
$$\mu(x,y):= \left\{ \begin{array}{cl}
		1 & \mbox{if }\langle x \rangle = \langle y \rangle  \\
		(-1)^r & \mbox{if }  \langle x \rangle  = \langle q_1q_2\ldots q_r y \rangle   \\
		0   &\mbox{otherwise,} 
	\end{array}\right. $$ where $q_1, q_2, \ldots , q_r$ are some distinct prime divisors of the order of $y$. Note that $$\mu(x,y)=\dot{\mu}\bigg(\frac{\mbox{order of } y}{\mbox{order of } x} \bigg),$$ where $\dot{\mu}$ is classic M$\ddot{\text{o}}$bius function. Similarly, define 
$$\varphi(x,y):= \left\{ \begin{array}{cl}
		1 & \mbox{if }\langle x \rangle  = \langle y \rangle  \\
		(q_1-1)(q_2-1)\ldots (q_r-1) & \mbox{if }  \langle x \rangle  = \langle q_1q_2\ldots q_r y \rangle  \\
		1   &\mbox{otherwise} 
	\end{array}\right. $$  where $q_1, q_2, \ldots , q_r$ are some distinct prime divisors of the order of $y$. The main result of the paper is the following.

\noindent \textbf{Theorem} \textit{(Theorem~\ref{MainResultProof}).  Let $Z$ be an abelian group and $ \alpha,  x  \in Z$. Then 
\begin{align}
C_{\alpha}(x) &= \mu( \bar{\alpha} , x)  ~ \frac{|[x]| }{ \varphi( \bar{\alpha} , x) }, \nonumber
\end{align} where $\bar{\alpha}$ is a generator of the cyclic group $ \{ y : \psi_{\alpha}(y) =1 \}  \cap \langle x \rangle$.}

%We discuss an analogue of M$\ddot{\text{o}}$bius inversion formula in Section 2. Use it as a tool we prove the main result of the paper.

In Section 2, we present an analogue of the M$\ddot{\text{o}}$bius inversion formula. We will use it to prove the main result in Section 3.

%\noindent \textbf{Theorem} \textit{(Theorem~\ref{MainResultProof}). Let $Z$ be an abelian group and $ \alpha,  x  \in Z$. Then 
%\begin{align}
%C_{\alpha}(x) = \left\{ \begin{array}{cl}
%		|[x]| & \mbox{if }\langle \bar{\alpha} \rangle = \langle x \rangle  \\
%		(-1)^r ~ |[x]| ~  \prod\limits_{i=1}^{r}   \frac {|M_i|}{| \langle \bar{\alpha} \rangle | - |M_i|}     & \mbox{if } \langle \bar{\alpha} \rangle = \bigcap\limits_{j=1}^{r} M_j  \\
%		0   &\mbox{otherwise} 
%	\end{array}\right. \nonumber
%\end{align} where $\bar{\alpha}$ is a generator of the cyclic group $ \{ y : \psi_{\alpha}(y) =1 \} \cap \langle x \rangle$ and $M_1, M_2, \ldots , M_r$ are some maximal subgroups of $\langle x \rangle $.}

%%%%%%%%%%%%%%%%%%%%%%%%%%%%%%%%%%%%%%%%%%%%%%%%%%%%%%%%%%%%%%%
%%%%%%%%%%%%%%%%%%%%%%%%%%%%%%%%%%%%%%%%%%%%%%%%%%%%%%%%%%%%%%%

\section{Preliminaries}

\begin{lema}\label{muSumSubset10} Let $Z$ be an abelian group and $x,y, \alpha \in Z$. The following statements are true:
\begin{enumerate}[label=(\roman*)]
\item $$ \sum_{\substack{ x\in \mathcal{A}( y ) \\ \langle \alpha \rangle  \subseteq \langle x \rangle }}  \mu(x,y)= \left\{ \begin{array}{cl}
		1 &  \mbox{if } \langle \alpha \rangle = \langle y \rangle \\
		0   &\mbox{otherwise.} 
	\end{array}\right. $$ 	
	
\item $$ \sum_{\substack{ y\in \mathcal{A}( \alpha ) \\ \langle x \rangle  \subseteq \langle y \rangle }}  \mu(x,y)= \left\{ \begin{array}{cl}
		1 &  \mbox{if } \langle \alpha \rangle = \langle x \rangle \\
		0   &\mbox{otherwise.} 
	\end{array}\right. $$ 	
\end{enumerate}
\end{lema}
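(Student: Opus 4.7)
My plan is to reduce both statements of the lemma to the classical identity $\sum_{d \mid m} \dot{\mu}(d) = [m=1]$ by exploiting the identity already observed in the introduction, namely $\mu(x,y) = \dot{\mu}\!\left(\operatorname{ord}(y)/\operatorname{ord}(x)\right)$, and by parametrising equivalence classes by divisors. For any abelian element $z$ of order $n$, the cyclic group $\langle z \rangle$ contains exactly one cyclic subgroup of each order $d \mid n$, so the set $\mathcal{A}(z)$ of $\sim$-class representatives in $\langle z \rangle$ is in natural bijection with the divisor set $\{d : d \mid n\}$; under this bijection, the representative of order $d$ corresponds to $d$, and two elements generate the same cyclic subgroup precisely when they have the same order within $\langle z \rangle$.

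For part (i), I first dispose of the case $\alpha \notin \langle y \rangle$: the summation condition $\langle \alpha \rangle \subseteq \langle x \rangle \subseteq \langle y \rangle$ is then vacuous, so the sum is $0$, and the right-hand side is also $0$ since $\langle \alpha \rangle \neq \langle y \rangle$. Assume therefore $\alpha \in \langle y \rangle$. Writing $n = \operatorname{ord}(y)$ and $e = \operatorname{ord}(\alpha)$, the condition $\langle \alpha \rangle \subseteq \langle x \rangle$ becomes $e \mid \operatorname{ord}(x)$ (since $\langle y \rangle$ is cyclic and contains a unique subgroup of each divisor order). Setting $\operatorname{ord}(x) = d$ and $d = ek$, the sum rewrites as
\begin{align*}
\sum_{\substack{d \mid n \\ e \mid d}} \dot{\mu}(n/d) \;=\; \sum_{k \mid n/e} \dot{\mu}\!\left(\tfrac{n/e}{k}\right) \;=\; \sum_{k \mid n/e} \dot{\mu}(k),
\end{align*}
which equals $1$ if $n/e=1$ and $0$ otherwise, matching exactly $[\langle \alpha \rangle = \langle y \rangle]$ because $\alpha \in \langle y \rangle$ and $e = n$ force $\langle \alpha \rangle = \langle y \rangle$.

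Part (ii) follows by the symmetric manoeuvre. If $x \notin \langle \alpha \rangle$, no $y$ satisfies $\langle x \rangle \subseteq \langle y \rangle \subseteq \langle \alpha \rangle$ and both sides are $0$. Otherwise, with $n = \operatorname{ord}(\alpha)$ and $f = \operatorname{ord}(x)$, parametrising $y$ by its order $m \mid n$ converts the condition $\langle x \rangle \subseteq \langle y \rangle$ into $f \mid m$, and the sum becomes
\begin{align*}
\sum_{\substack{m \mid n \\ f \mid m}} \dot{\mu}(m/f) \;=\; \sum_{k \mid n/f} \dot{\mu}(k),
\end{align*}
which is again the classical Möbius delta, yielding $[\langle \alpha \rangle = \langle x \rangle]$.

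The only subtle point I foresee is keeping the two containment directions straight: in (i) the equivalence classes live in $\langle y \rangle$ and the side condition pins the orders from below, whereas in (ii) the classes live in $\langle \alpha \rangle$ and the side condition pins them from above. Once the bijection between $\mathcal{A}(\cdot)$ and divisors is set up cleanly, both reductions are the same change of variables $m = fk$ (or $d = ek$) applied to Möbius's identity, so no genuine obstacle remains beyond bookkeeping.
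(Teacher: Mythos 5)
Your proof is correct, and it takes a genuinely different route from the paper's. You reduce both identities to the classical fact $\sum_{d\mid m}\dot{\mu}(d)=[m=1]$ by combining the relation $\mu(x,y)=\dot{\mu}\bigl(\operatorname{ord}(y)/\operatorname{ord}(x)\bigr)$ (which the paper records, without proof, immediately after defining $\mu$; it is a one-line check from the definition, but strictly speaking your argument inherits that unproved step) with the bijection between $\mathcal{A}(z)$ and the divisors of $\operatorname{ord}(z)$, after which everything is the change of variables $d=ek$. The paper instead works directly inside the group: it lists the maximal subgroups $\langle q_1y\rangle,\dots,\langle q_ry\rangle$ of $\langle y\rangle$ containing $\langle\alpha\rangle$, notes that the nonzero terms of the sum correspond to intersections of $j$ of these subgroups and carry sign $(-1)^j$, and concludes with $\sum_{j=0}^{r}\binom{r}{j}(-1)^j=0$; part (ii) requires an auxiliary element $\alpha'$ to isolate the classes where $\mu$ does not vanish. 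The two arguments rest on the same combinatorics --- the binomial identity is precisely the standard proof of the classical M\"obius identity --- but yours is shorter because it outsources that step to a cited classical fact, and it is slightly more complete in that it explicitly disposes of the degenerate cases $\alpha\notin\langle y\rangle$ and $x\notin\langle\alpha\rangle$, which the paper's two-case split passes over in silence. What the paper's self-contained version buys is the ``intersection of maximal subgroups'' picture, which it reuses later (e.g.\ in the lemma characterising when $\mu(x,y)\neq 0$), whereas your reduction makes transparent that the lemma is nothing more than the classical M\"obius delta transported along the order map.
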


\begin{proof}
\begin{enumerate}[label=(\roman*)]
\item If $\langle \alpha \rangle = \langle y \rangle$ then $$ \sum_{\substack{ x\in \mathcal{A}( y  ) \\ \langle \alpha \rangle  \subseteq \langle x \rangle }}  \mu(x,y)=   \mu(y,y) = 1.$$ Assume that $\langle \alpha \rangle \subsetneqq \langle y \rangle$.  We have
$$\sum_{\substack{ x\in \mathcal{A}( y ) \\ \langle \alpha \rangle  \subseteq \langle x \rangle }}  \mu(x,y)= \sum_{\substack{ x\in \mathcal{A}( y ) \\ \langle \alpha \rangle  \subseteq \langle x \rangle \\ \mu(x,y)\neq 0 }}  \mu(x,y).$$ Let $\langle q_1 y \rangle, \langle q_2 y \rangle, \ldots , \langle q_r y \rangle$ are the only maximal subgroups of $\langle y \rangle$ which  contains $\langle \alpha \rangle$, where $q_1,q_2,\ldots , q_r$ are some prime divisors of the order of $y$. For any $0 \leq j \leq r$, ${r \choose j}$ many distinct $x$ exist in $\mathcal{A}( y )$ with $\mu (x,y)=(-1)^j$ ($\langle x \rangle $ can be express as $j$ many maximal subgroups of $\langle y \rangle $). Therefore, we get
$$ \sum_{\substack{ x\in \mathcal{A}( y ) \\ \langle \alpha \rangle  \subseteq \langle x \rangle \\ \mu(x,y)\neq 0 }}  \mu(x,y)  = \sum_{j=0}^{r} {r \choose j} (-1)^j=(1-1)^{r}=0.$$

\item If $\langle \alpha \rangle = \langle x \rangle$ then $$ \sum_{\substack{ y\in \mathcal{A}(\langle \alpha \rangle ) \\ \langle x \rangle  \subseteq \langle y \rangle }}  \mu(x,y)=   \mu(\alpha , \alpha) = 1.$$ Assume that $\langle x \rangle \subsetneqq \langle \alpha \rangle$. 
Then $\langle x \rangle = \langle q_1^{b_1}q_2^{b_2} \ldots q_r^{b_r}  \alpha \rangle$, where $q_1,q_2,\ldots , q_r$ are some distinct prime divisors such that $q_i^{b_i}$ divides the order of $\alpha$ and $b_i \geq 1$ for each $i=1,2,\ldots , r$. Let $\alpha' = q_1^{b_1-1}q_2^{b_2-1} \ldots q_r^{b_r-1}  \alpha$. We get  
\begin{align}
 \sum_{\substack{ y\in \mathcal{A}( \alpha ) \\ \langle x \rangle  \subseteq \langle y \rangle }}  \mu(x,y)  = \sum_{\substack{ y\in \mathcal{A}( \alpha ) \\ \langle x \rangle  \subseteq \langle y \rangle  \\ \mu(x,y)\neq 0 }}  \mu(x,y) =  \sum_{\substack{ y\in \mathcal{A}( \alpha' ) \\ \langle x \rangle  \subseteq \langle y \rangle }}  \mu(x,y). \nonumber 
 \end{align}  Here, the last equality holds from the fact that $\mu(x,y) \neq 0$ if and only if $y\in \mathcal{A}( \alpha' )$.
Therefore, $\langle q_1 \alpha' \rangle, \langle q_2 \alpha' \rangle, \ldots , \langle q_r \alpha' \rangle$ are the only maximal subgroups of $\langle \alpha' \rangle$ which  contains $\langle x \rangle$. Let $\mathcal{S}=\{ \langle q_1 \alpha' \rangle, \langle q_2 \alpha' \rangle, \ldots , \langle q_r \alpha' \rangle\}$. Note that if $y\in \mathcal{A}( \alpha' )$ and $\langle x \rangle  \subseteq \langle y \rangle$ then $\langle y \rangle = \bigcap\limits_{M \in \mathcal{T}} M$ for some $\mathcal{T} \subseteq \mathcal{S}$. It implies that $\mu(x,y) = (-1)^{r-|\mathcal{T}|}$. Therefore, for any $0 \leq j \leq r$, ${r \choose j}$ many distinct $y$ exist in $\mathcal{A}( \alpha')$ satisfying $\mu(x,y)=(-1)^{r-j}$. We get
 \begin{align}
 \sum_{\substack{ y\in \mathcal{A}( \alpha' ) \\ \langle x \rangle  \subseteq \langle y \rangle }}  \mu(x,y)= \sum_{j=0}^{r} {r \choose j} (-1)^{r-j}=(-1+1)^{r}=0. \nonumber
 \end{align} 
\end{enumerate}
\end{proof}

%%%%%%%%%%%%%%%%%%%%%%%%%%%%%%%%%%%%%%%%%%%%%%%%%%%%%%%%%%%%%%%
%%%%%%%%%%%%%%%%%%%%%%%%%%%%%%%%%%%%%%%%%%%%%%%%%%%%%%%%%%%%%%%
%%%%%%%%%%%%%%%%%%%%%%%%%%%%%%%%%%%%%%%%%%%%%%%%%%%%%%%%%%%%%%%

%\section{An analogue of M$\ddot{\text{o}}$bius inversion formula}

The classic M$\ddot{\text{o}}$bius inversion formula is a relation between two arithmetic functions, both functions are defined from the other by taking sums over divisors. It was introduced by August Ferdinand M$\ddot{\text{o}}$bius in 1832. In the cyclic group $\mathbb{Z}_n$, all of the devisor of $n$ can be taken as representatives of the equivalence classes of $\sim$. In general, if $Z$ be an abelian group and $x\in Z$, then $\mathcal{A}(x)=\{ x^d: d \mbox{ divides the order of } x\}$. Keeping this feature in mind, we define an analogue of M$\ddot{\text{o}}$bius inversion formula that aligns two functions of the domain of abelian groups. The analogue is as follows.

\begin{theorem}\label{MobiusInvForm}
Let $Z$ be an abelian group and $f,g: Z \to \mathbb{C}$. Then 
\begin{align}
f(x)= \sum_{y \in \mathcal{A}( x )} g(y) \label{MobiusInvForEq1}
\end{align}
if and only if
\begin{align}
  g(x) = \sum_{y \in \mathcal{A}( x )} f(y)~ \mu(y,x). \label{MobiusInvForEq2}
 \end{align}
\end{theorem}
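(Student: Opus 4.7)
The plan is to use the standard M$\ddot{\text{o}}$bius inversion argument: substitute one of the two formulas into the right-hand side of the other, swap the order of summation, and apply Lemma~\ref{muSumSubset10} to collapse the inner sum. The two parts of that lemma are tailored to handle the two directions of the equivalence, so I would treat them separately but symmetrically.

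For the forward direction, I would assume $f(x) = \sum_{y \in \mathcal{A}(x)} g(y)$ and substitute into $\sum_{y \in \mathcal{A}(x)} f(y)\,\mu(y,x)$, obtaining
\begin{align*}
\sum_{y \in \mathcal{A}(x)} \mu(y,x) \sum_{z \in \mathcal{A}(y)} g(z).
\end{align*}
The conditions $y \in \mathcal{A}(x)$ and $z \in \mathcal{A}(y)$ jointly say that $\langle z \rangle \subseteq \langle y \rangle \subseteq \langle x \rangle$, so interchanging the sums gives
\begin{align*}
\sum_{z \in \mathcal{A}(x)} g(z) \sum_{\substack{y \in \mathcal{A}(x) \\ \langle z \rangle \subseteq \langle y \rangle}} \mu(y,x).
\end{align*}
The inner sum is exactly the left-hand side of Lemma~\ref{muSumSubset10}(i), with the lemma's $y$ played by our $x$ and the lemma's $\alpha$ played by our $z$. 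Hence it equals $1$ when $\langle z \rangle = \langle x \rangle$ and $0$ otherwise, and the double sum collapses to $g(x)$, as required.

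The reverse direction is symmetric. I would assume $g(x) = \sum_{y \in \mathcal{A}(x)} f(y)\,\mu(y,x)$, plug this into $\sum_{y \in \mathcal{A}(x)} g(y)$, interchange summation, and apply Lemma~\ref{muSumSubset10}(ii) (with its $\alpha$ taken to be our $x$ and its $x$ taken to be the innermost variable $z$) to annihilate every term except the one with matching cyclic subgroup, recovering $f(x)$.

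The hard part will be purely notational bookkeeping: in each direction three elements are in play (the outer argument $x$, the outer summation variable, and the inner summation variable after the swap), and one must correctly match them to the ``large,'' ``middle,'' and ``small'' roles of the three subgroups appearing in Lemma~\ref{muSumSubset10}. Once this matching is set up, part (i) slots into the forward direction and part (ii) slots into the reverse direction and the proof is immediate. A minor technical point is that the representatives should be chosen compatibly, so that $\mathcal{A}(y) \subseteq \mathcal{A}(x)$ whenever $\langle y \rangle \subseteq \langle x \rangle$; this is automatic because each equivalence class inside a subgroup of $Z$ is also an equivalence class in $Z$.
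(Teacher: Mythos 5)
Your proposal is correct and takes essentially the same approach as the paper's own proof: substitute one identity into the other, interchange the order of summation, and collapse the inner sum via Lemma~\ref{muSumSubset10}(i) in the forward direction and Lemma~\ref{muSumSubset10}(ii) in the reverse direction, with exactly the variable-matching you describe. The compatibility-of-representatives point you flag at the end is the only real technicality, and the paper leaves it implicit as well.
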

\begin{proof}
Assume that Equation (\ref{MobiusInvForEq1}) holds. We have
\begin{align}
 \sum_{y \in \mathcal{A}( x )} f(y) ~ \mu(y,x) &=   \sum_{y \in \mathcal{A}( x  )}   \sum_{\alpha \in \mathcal{A}( y )} g(\alpha) ~ \mu(y,x) \nonumber\\
 &=  \sum_{\alpha \in \mathcal{A}( x )}  \sum_{ \substack{ y \in \mathcal{A}( x ) \\ \langle \alpha \rangle  \subseteq \langle y \rangle } }    g(\alpha)  ~ \mu(y,x) \nonumber\\
 &=  \sum_{\alpha \in \mathcal{A}( x )}  g(\alpha) \sum_{ \substack{ y \in \mathcal{A}( x ) \\ \langle \alpha \rangle  \subseteq \langle y \rangle } }      \mu(y,x) \nonumber\\
 &= g(x).\nonumber
\end{align}
Here the first equality follows from Equation (\ref{MobiusInvForEq1}) and last equality follows from Part $(i)$ of Lemma~\ref{muSumSubset10}. Conversely, assume that Equation (\ref{MobiusInvForEq2}) holds. We have
\begin{align}
\sum_{y \in \mathcal{A}( x  )} g(y)  &= \sum_{y \in \mathcal{A}( x  )}    \sum_{\alpha \in \mathcal{A}( y )} f(\alpha) ~ \mu(\alpha, y)  \nonumber\\
&= \sum_{\alpha \in \mathcal{A}( x )}  \sum_{ \substack{ y \in \mathcal{A}( x ) \\ \langle \alpha \rangle  \subseteq \langle y \rangle } }  f(\alpha) ~ \mu(\alpha, y) \nonumber\\
&= \sum_{\alpha \in \mathcal{A}( x )} f(\alpha) \sum_{ \substack{ y \in \mathcal{A}( x ) \\ \langle \alpha \rangle  \subseteq \langle y \rangle } }   \mu(\alpha, y) \nonumber\\
&= f(x).
\end{align}
Here the first equality follows from Equation (\ref{MobiusInvForEq2}) and last equality follows from Part $(ii)$ of Lemma~\ref{muSumSubset10}. 
\end{proof}

\begin{lema}\label{GroupSizeEqEquiLema} Let $Z$ be an abelian group and $ x \in Z$. Then
\begin{align}
|[x]| = \sum_{y \in \mathcal{A}( x )}     |\langle y \rangle | ~ \mu(y,x).   
\end{align} 
\end{lema}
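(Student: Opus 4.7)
The plan is to recognize this as a direct application of the Möbius inversion formula from Theorem~\ref{MobiusInvForm}, once we identify the right pair of functions $f,g$.

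The key structural observation is that $\langle x \rangle$ decomposes as a disjoint union of the equivalence classes $[y]$, where $y$ ranges over $\mathcal{A}(x)$. Indeed, every element $z \in \langle x \rangle$ generates some cyclic subgroup $\langle z \rangle \subseteq \langle x \rangle$, and $z$ lies in the unique equivalence class consisting of all generators of $\langle z \rangle$; picking one representative per class gives exactly $\mathcal{A}(x)$. Taking cardinalities therefore yields
\begin{align}
|\langle x \rangle| = \sum_{y \in \mathcal{A}(x)} |[y]|. \nonumber
\end{align}
This is the abelian-group analogue of the classical identity $n = \sum_{d \mid n} \varphi(d)$.

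With this identity in hand, I would define $f, g \colon Z \to \mathbb{C}$ by $f(x) := |\langle x \rangle|$ and $g(x) := |[x]|$. The displayed identity above is precisely Equation~(\ref{MobiusInvForEq1}) of Theorem~\ref{MobiusInvForm}. Applying that theorem immediately gives
\begin{align}
|[x]| = g(x) = \sum_{y \in \mathcal{A}(x)} f(y)\, \mu(y,x) = \sum_{y \in \mathcal{A}(x)} |\langle y \rangle|\, \mu(y,x), \nonumber
\end{align}
which is exactly the claimed formula.

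The main (and really the only nontrivial) step is justifying the partition $\langle x \rangle = \bigsqcup_{y \in \mathcal{A}(x)} [y]$; once that is in place, the result is a one-line invocation of the Möbius inversion formula just proved. I would spend at most a sentence on the partition, since it follows from the very definition of the relation $\sim$ and of $\mathcal{A}(x)$ as a set of representatives.
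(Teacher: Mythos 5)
Your proposal is correct and follows exactly the paper's own argument: the paper likewise starts from the partition identity $|\langle x \rangle| = \sum_{y \in \mathcal{A}(x)} |[y]|$ and then invokes Theorem~\ref{MobiusInvForm}. You simply spell out the justification of the partition in slightly more detail than the paper does, which is fine.
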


\begin{proof}
We have 
\begin{align}
|\langle x \rangle | = \sum_{y \in \mathcal{A}( x )} |[y]|. \nonumber
\end{align} 
Now the result follows from Theorem~\ref{MobiusInvForm}.
\end{proof}

%%%%%%%%%%%%%%%%%%%%%%%%%%%%%%%%%%%%%%%%%%%%%%%%%%%%%%%%%%%%%%%
%%%%%%%%%%%%%%%%%%%%%%%%%%%%%%%%%%%%%%%%%%%%%%%%%%%%%%%%%%%%%%%
%%%%%%%%%%%%%%%%%%%%%%%%%%%%%%%%%%%%%%%%%%%%%%%%%%%%%%%%%%%%%%%

\section{Proof of Main Theorem}

Let $S$ be a subgroup of the abelian group. Define $$S^{\perp} = \{ x: \psi_s(x)=1 \mbox{ for all } s\in S\}.$$ For example, if $Z=\mathbb{Z}_5 \otimes \mathbb{Z}_5 \otimes \mathbb{Z}_{25}$ then $$\langle (1,0,5) \rangle ^{\perp} = \langle (1 , 1, 4) \rangle \cup \langle (1 , 2, 4) \rangle \cup \langle (1 , 3, 4) \rangle \cup \langle (1 , 4, 4) \rangle  \cup \langle (1 , 0, 4) \rangle \cup \langle (0 ,1, 0) \rangle.$$

\noindent For any $\alpha , x \in Z$, define $$f_{\alpha}(x) :=  \sum\limits_{s\in \langle x \rangle } \psi_{\alpha}(s).$$ We have 
\begin{align}
f_{\alpha}(x) = \left\{ \begin{array}{cl}
		|\langle x \rangle | &  \mbox{if }  \alpha \in \langle x \rangle ^{\perp} \\
		0   &\mbox{otherwise.} 
	\end{array}\right.  \nonumber
\end{align} 
We can also write 
\begin{align}
f_{\alpha}(x) = \sum\limits_{s\in \langle x \rangle } \psi_{\alpha}(s)= \sum_{y \in \mathcal{A}( x ) }   \sum\limits_{s\in [ y ] } \psi_{\alpha}(s)=  \sum_{y \in \mathcal{A}( x ) } C_{\alpha}(y). \nonumber
\end{align}

\noindent By Theorem \ref{MobiusInvForm}, we get
\begin{align}
C_{\alpha}(x) &= \sum_{y \in \mathcal{A}( x ) } f_{\alpha}(y) \mu(y,x) \nonumber\\
&= \sum_{ \substack{ y \in \mathcal{A}( x ) \\ \alpha \in \langle y \rangle ^{\perp} } }  |\langle y \rangle|  \mu(y,x) \nonumber\\ 
&= \sum_{ \substack{ y \in \mathcal{A}( x ) \\ y \in \langle \alpha \rangle ^{\perp} } }  |\langle y \rangle|  \mu(y,x).\label{SubFormulaEq1}
\end{align}

%%%%%%%%%%%%%%%%%%%%%%%%%%%%%%%%%%%%%%%%%%%%%%%%%%%%%%%%%%%%

\noindent Now onwards, we will use the following terminologies:
\begin{itemize}
\item We consider $Z=Z_{n_1} \times Z_{n_2} \times \cdots \times Z_{n_k}$ with $n=n_1 \ldots n_k$, where $Z_{n_i}$ is an abelian group of order $n_i = p_i^{a_i}$ with $a_i \geq 1$ for each $i=1, \ldots , k$. And also, $p_1, p_2, \ldots , p_k$ are distinct primes.
\item We consider  $Z_{n_i}= \mathbb{Z}_{n_{i,1}} \times \cdots \times \mathbb{Z}_{ n_{i,k_i} }$ with $n_i=n_{i,1}\ldots n_{i,k_i}$, where $n_{i,j}=p_i^{a_{i,j}}$ with $a_{i,j} \geq 1$ for each $j=1, \ldots , k_i$. Note that $a_{i,1} + \cdots + a_{i,k_i}=a_i$ for each $i=1, \ldots , k$.
%\item Both $0$ or $p_i^{a_i}$ are consider as the same element of $\mathbb{Z}_{n_i}$.
\item We write the elements $x\in Z $ as elements of the cartesian product $Z_{n_1} \times Z_{n_2} \times \cdots \times Z_{n_k}$, $i.e.$ 
 $$x=(x_1,x_2,\ldots ,x_k),  \mbox{ where } x_i \in Z_{n_i} \mbox{ for all } i \in \{1, \ldots , k\}. $$
 And also, we write the elements $x_i \in Z_{n_i} $ as elements of the cartesian product $\mathbb{Z}_{n_{i,1}} \times \cdots \times \mathbb{Z}_{ n_{i,k_i} }$, $i.e.$ 
  $$x_i=(x_{i,1},x_{i,2},\ldots ,x_{i,k_i}),  \mbox{ where } x_{i,j} \in \mathbb{Z}_{n_{i,j}} \mbox{ for all } j \in \{1, \ldots , k_i\}. $$

\item For any $x,y \in Z$ and $x\in \langle y \rangle$, we say $$\gcd(x_i, n_i) = \gcd(y_i, n_i)$$ if it satisfies 
\begin{align}
 \gcd(x_{i,j}, n_i) = \gcd(y_{i,j}, n_i) \textnormal{ for all } j =1,2, \ldots , k_i. \label{eq7New}
\end{align}
\item If $x_i$ and $y_i$ does not satisfy Equation~(\ref{eq7New}) then we say $\gcd(x_i, n_i) \neq \gcd(y_i, n_i)$.

\item For $x, y \in Z $ and $x \in \langle y \rangle$, define $1_{x,y}'$ and $1_{x,y}''$ such that $i$-th coordinate of $1_{x,y}'$ is given by
\begin{align}
1_{x,y}'(i) = \left\{ \begin{array}{cl}
		(0,0,\ldots , 0) &  \mbox{if } \gcd (  x_i , n_i ) = \gcd ( y_i , n_i) \\
		(1,1,\ldots , 1)   &\mbox{otherwise,} 
	\end{array}\right.  \nonumber
\end{align} 
and $i$-th coordinate of $1_{x,y}''$ is given by
\begin{align}
1_{x,y}''(i) = \left\{ \begin{array}{cl}
		(1,1,\ldots , 1) &  \mbox{if }  \gcd (  x_i , n_i ) = \gcd ( y_i , n_i ) \\
		  (0,0,\ldots , 0)  &\mbox{otherwise.} 
	\end{array}\right.  \nonumber
\end{align}  

\end{itemize}

\begin{lema}\label{MuNonZeroCond} Let $x,y \in Z$, $x\in \langle y \rangle$, $\gcd(x_{i,j}, n_i)=p_i^{b_{i,j}}$, and $\gcd(y_{i,j}, n_i)=p_i^{c_{i,j}}$. Then  $\mu(x,y) \neq 0$ if and only if $b_{i,j} - c_{i,j} =0$  or $1$ for all $i$ and  $j$.
\end{lema}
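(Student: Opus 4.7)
The plan is to reduce the lemma to a single clean condition on an integer $m$ realising $x=my$ (such an $m$ exists since $x \in \langle y \rangle$), and then unpack that condition coordinate-by-coordinate using the direct-product structure of $Z$.

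First I would exploit the cyclicity of $\langle y \rangle$: every subgroup of $\langle y \rangle$ is determined by its order, and $|\langle my \rangle| = |\langle y \rangle|/\gcd(m,|\langle y \rangle|)$. Consequently $\langle x \rangle = \langle Py \rangle$ if and only if $\gcd(m,|\langle y \rangle|)=\gcd(P,|\langle y \rangle|)$. By the definition of $\mu$, $\mu(x,y)\neq 0$ holds exactly when $\langle x \rangle = \langle Py \rangle$ for some $P=q_1 q_2 \cdots q_r$ with $q_1,\ldots,q_r$ distinct primes dividing $|\langle y \rangle|$. Since the corresponding $\gcd(P,|\langle y \rangle|)$ is then a squarefree divisor of $|\langle y \rangle|$, and conversely every squarefree divisor of $|\langle y \rangle|$ arises this way, the whole problem reduces to the equivalence: $\mu(x,y)\neq 0 \iff \gcd(m,|\langle y \rangle|)$ is squarefree.

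Next I would express squarefreeness primewise. Writing $|\langle y \rangle|=\prod_i p_i^{e_i}$ with $e_i=\max_j(a_{i,j}-c_{i,j})$, the $p_i$-valuation of $\gcd(m,|\langle y \rangle|)$ is $\min(v_{p_i}(m),\,e_i)$, where $v_{p_i}$ denotes the $p_i$-adic valuation. So squarefreeness is equivalent to $\min(v_{p_i}(m),e_i)\leq 1$ for every $i$. To connect this to the $b_{i,j}$'s I would compute directly from $x_{i,j}=m\,y_{i,j}$ in $\mathbb{Z}_{n_{i,j}}$ and establish
\[
b_{i,j}-c_{i,j} \;=\; \min\bigl(v_{p_i}(m),\, a_{i,j}-c_{i,j}\bigr),
\]
which covers both the generic case $v_{p_i}(m)+c_{i,j}<a_{i,j}$ and the overflow $m y_{i,j}\equiv 0 \pmod{n_{i,j}}$ (the latter also handling $y_{i,j}=0$). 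Taking the maximum over $j$ and using $\max_j\min(A,B_j)=\min(A,\max_j B_j)$ yields $\max_j(b_{i,j}-c_{i,j})=\min(v_{p_i}(m),e_i)$. Since $b_{i,j}-c_{i,j}\geq 0$ is automatic, the condition $b_{i,j}-c_{i,j}\in\{0,1\}$ for all $i,j$ is precisely $\min(v_{p_i}(m),e_i)\leq 1$ for all $i$, completing the chain of equivalences.

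The main obstacle I anticipate is the bookkeeping in the coordinate formula for $b_{i,j}-c_{i,j}$, particularly the overflow case where $v_{p_i}(m)+c_{i,j}\geq a_{i,j}$ forces $x_{i,j}$ to vanish in $\mathbb{Z}_{n_{i,j}}$ and thus caps $b_{i,j}$ at $a_{i,j}$. Verifying that the single $\min$ formula captures this uniformly, together with the trivial case $y_{i,j}=0$ (where both sides are zero), is the only real work; once that formula is in hand, the rest of the argument is a one-line primewise translation.
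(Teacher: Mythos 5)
Your proposal is correct, and it is organized differently from the paper's proof. The paper argues the two directions separately and directly from the definition of $\mu$: for the forward direction it observes that passing from $y$ to $q_1q_2\cdots q_ry$ with distinct primes $q_i$ raises each coordinate's $p_i$-valuation by at most $1$; for the converse it reorders the blocks, takes $r$ to be the number of blocks where some coordinate difference equals $1$, and asserts $\langle x \rangle = \cap_{i=1}^{r}\langle p_i y\rangle$, whence $\mu(x,y)=(-1)^r$. You instead introduce the multiplier $m$ with $x=my$ and prove a single clean equivalence, $\mu(x,y)\neq 0 \iff \gcd(m,|\langle y\rangle|)$ is squarefree, then translate squarefreeness coordinatewise via the identity $b_{i,j}-c_{i,j}=\min\bigl(v_{p_i}(m),\,a_{i,j}-c_{i,j}\bigr)$ and the exchange $\max_j\min(A,B_j)=\min(A,\max_j B_j)$. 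What your route buys is that both directions fall out of one computation, and the step the paper leaves implicit in its converse (why the stated valuation pattern forces $\langle x\rangle=\langle p_1\cdots p_r y\rangle$, i.e.\ why the order of $x$ is exactly $|\langle y\rangle|/(p_1\cdots p_r)$) is justified by your explicit $\min$ formula. Your careful handling of the overflow case also surfaces a point the paper glosses over: the lemma as literally stated uses $\gcd(x_{i,j},n_i)$, but when $x_{i,j}=0$ this gives $b_{i,j}=a_i$ rather than $a_{i,j}$ and the stated equivalence can fail (e.g.\ $Z=\mathbb{Z}_4\times\mathbb{Z}_2$, $y=(2,1)$, $x=2y=0$); your formula $b_{i,j}=\min(v_{p_i}(m)+c_{i,j},\,a_{i,j})$ implicitly adopts the intended convention $\gcd(x_{i,j},n_{i,j})$, under which everything you wrote goes through.
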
	
\begin{proof} It is clear that $b_{i,j} - c_{i,j} \geq 0$ for all $i$ and $j$. Assume that $\mu(x,y) \neq 0$. If $\langle x \rangle = \langle y \rangle $ then $b_{i,j} - c_{i,j}=0$ for all $i$ and $j$. And so, we are done. Now, assume that $\langle x \rangle \subsetneqq \langle y \rangle $. Assumption implies that $\langle x \rangle = \langle q_1 q_2\ldots q_r y \rangle$, where $q_1,q_2,\ldots q_r$ are the distinct prime divisors of the order of $y$. Therefore $\gcd(z_{i,j}, n_i)=p_i^{b_{i,j}}$ where $z=q_1 q_2\ldots q_r y$. Using the fact that $q_1,q_2,\ldots q_r$ are distinct primes, we get $b_{i,j} - c_{i,j} =0$ or $1$ for all $i$ and $j$. Conversely, assume that  $b_{i,j} - c_{i,j}=0$ or $1$ for all $i$ and $j$. If $y_{i,j} = 0 $, then $b_{i,j} - c_{i,j}=0$.  Without loss of generality, let $r$ be the smallest positive integer such that $b_{i,j} - c_{i,j}=1$ for all $i \leq r$ and some $j$. Therefore, $b_{i,j} - c_{i,j}=0$ for all $ i >r$ and $j$. And so,  $\langle x \rangle = \cap_{i=1}^{r} \langle p_i y \rangle $. Hence $\mu(x,y)=(-1)^r$, which is non-zero.
\end{proof}

\begin{lema}\label{MuZeroCond}
Let $x,y \in Z$ and $x\in \langle y \rangle$. If $\mu(x,y)=0$ then $\mu(tx,y)=0$ for all $t \in \mathbb{N}$.
\end{lema}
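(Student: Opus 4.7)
The plan is to use Lemma~\ref{MuNonZeroCond} in both directions: first to translate the hypothesis $\mu(x,y)=0$ into the existence of coordinates where the $p_i$-adic valuation of $x$ has already dropped by at least $2$ relative to $y$, and then to verify that the same bad coordinate still witnesses $\mu(tx,y)=0$ after replacing $x$ by $tx$. Since $x\in\langle y\rangle$ gives $tx\in\langle y\rangle$ automatically, the statement is well-posed for every $t\in\mathbb{N}$.

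First I would unpack the hypothesis. By Lemma~\ref{MuNonZeroCond} applied to the pair $(x,y)$, the assumption $\mu(x,y)=0$ produces indices $i^*, j^*$ with $b_{i^*,j^*}-c_{i^*,j^*}\geq 2$, where $\gcd(x_{i^*,j^*},n_{i^*})=p_{i^*}^{b_{i^*,j^*}}$ and $\gcd(y_{i^*,j^*},n_{i^*})=p_{i^*}^{c_{i^*,j^*}}$. The goal becomes: exhibit a pair of indices for $tx$ with the analogous $\geq 2$ gap. I claim the same pair $(i^*,j^*)$ always works.

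The key observation is that multiplication by a positive integer $t$ never decreases a $p_{i^*}$-adic valuation. Writing $\gcd((tx)_{i^*,j^*},n_{i^*})=p_{i^*}^{b'_{i^*,j^*}}$, there are two cases. If $x_{i^*,j^*}=0$ in $\mathbb{Z}_{n_{i^*,j^*}}$, then $(tx)_{i^*,j^*}=0$ as well, giving $b'_{i^*,j^*}=a_{i^*}\geq b_{i^*,j^*}$. Otherwise, write $x_{i^*,j^*}=p_{i^*}^{b_{i^*,j^*}}u$ with $\gcd(u,p_{i^*})=1$; then either $v_{p_{i^*}}(t)+b_{i^*,j^*}\geq a_{i^*,j^*}$, forcing $(tx)_{i^*,j^*}\equiv 0$ and so $b'_{i^*,j^*}=a_{i^*}$, or else $b'_{i^*,j^*}=v_{p_{i^*}}(t)+b_{i^*,j^*}\geq b_{i^*,j^*}$. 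In every case $b'_{i^*,j^*}\geq b_{i^*,j^*}$, hence
\[
b'_{i^*,j^*}-c_{i^*,j^*}\;\geq\;b_{i^*,j^*}-c_{i^*,j^*}\;\geq\;2.
\]
Applying the contrapositive of Lemma~\ref{MuNonZeroCond} to the pair $(tx,y)$ then yields $\mu(tx,y)=0$, completing the proof.

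There is essentially no serious obstacle here; the only thing requiring care is the bookkeeping around the reduction modulo $n_{i^*,j^*}$, which is why I split the argument into the vanishing and non-vanishing cases for $x_{i^*,j^*}$. Once that routine check is in place, the monotonicity of $p_{i^*}$-adic valuations under integer multiplication does all the work, and the lemma follows directly from two invocations of Lemma~\ref{MuNonZeroCond}.
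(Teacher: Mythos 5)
Your proposal is correct and follows essentially the same route as the paper: invoke Lemma~\ref{MuNonZeroCond} to get a coordinate with valuation gap at least $2$, observe that multiplication by $t$ cannot decrease the $p_i$-adic valuation so the gap persists, and apply Lemma~\ref{MuNonZeroCond} again. The only difference is that you spell out the case analysis behind the inequality $d_{i,j}\geq b_{i,j}$, which the paper simply asserts with a ``Note that.''
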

\begin{proof}
Assume that $\mu(x,y)=0$ and $t \in \mathbb{N}$. Let $\gcd(x_{i,j}, n_i)=p_i^{b_{i,j}}$, $\gcd(y_{i,j}, n_i)=p_i^{c_{i,j}}$, and $\gcd(tx_{i,j}, n_i)=p_i^{d_{i,j}}$. By Lemma~\ref{MuNonZeroCond}, we get $b_{i,j} - c_{i,j}  \geq 2$ for some $i$ and $j$. Note that $d_{i,j} \geq b_{i,j}$. Therefore, $d_{i,j} - c_{i,j}  \geq 2$ for some $i$ and $j$. Again, Lemma~\ref{MuNonZeroCond} implies that $\mu(tx,y)=0$.
\end{proof}

%%%%%%%%%%%%%%%%%%%%%%%%%%%%%%%%%%%%%%%%%%%%%%%%%%%%%%%%%%%%

%%%%%%%%%%%%%%%%%%%%%%%%%%%%%%%%%%%%%%%%%%%%%%%%%%%%%%%%%%%%

%%%%%%%%%%%%%%%%%%%%%%%%%%%%%%%%%%%%%%%%%%%%%%%%%%%%%%%%%%%

%\begin{lema}
%  Let $x , y  \in Z$, and $x\in \langle y \rangle$. The following statements %are true:
%  \begin{enumerate} 
%      \item $|\langle x \rangle | = |\langle x1_{x,y}' \rangle | ~ |\langle %x1_{x,y}'' \rangle |$.
%      \item $\mu(x,y)= \mu(x1_{x,y}',y1_{x,y}') ~ %\mu(x1_{x,y}'',y1_{x,y}'')$.
%      \item $\mu(x1_{x,y}' ,y1_{x,y}' )=\mu(x,y)$.
%  \end{enumerate}
%\begin{proof} Let $x'=x1'_{x,y}$ and $y'' = y 1''_{x,y} $.
%    \begin{enumerate}
%       \item Easy to prove.
%        \item 
%        \item
%    \end{enumerate}
%\end{proof}

\begin{lema}\label{NewResultHelpFinal}
Let $x , y  \in Z$, and $x\in \langle y \rangle$. If $\mu(x,y) \neq 0$ then $|\langle x1'_{x,y} \rangle | ~ |[y 1''_{x,y} ]| = \frac{|[y]|}{\varphi(x,y)} $.
\end{lema}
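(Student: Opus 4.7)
The plan is to exploit the decomposition $Z = Z_{n_1} \times \cdots \times Z_{n_k}$ into its $p_i$-primary components. Let $I = \{i : \gcd(x_i, n_i) = \gcd(y_i, n_i)\}$ and $J = \{1, \ldots, k\} \setminus I$. For any $z \in Z$, the cyclic subgroup $\langle z \rangle$ factors as $\prod_i \langle z_i \rangle$ because the orders $|\langle z_i \rangle|$ are pairwise coprime $p_i$-powers; for the same reason the number of generators of $\langle z \rangle$ factors as $\prod_i |[z_i]|$, where $|[z_i]|$ denotes the number of generators of $\langle z_i \rangle$ inside $Z_{n_i}$. Both sides of the claimed identity will thus split as products over $i$.

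Second, I will pin down $\varphi(x,y)$ in terms of $J$. Under $\mu(x,y) \neq 0$, Lemma~\ref{MuNonZeroCond} gives $b_{i,j} - c_{i,j} \in \{0,1\}$ for all $i,j$, and the argument inside that proof further shows that $\langle x \rangle = \bigcap_{i \in J}\langle p_i y \rangle = \langle \prod_{i \in J} p_i \cdot y \rangle$ (the last equality because distinct prime multiples of $y$ cut out an intersection generated by their product in the cyclic group $\langle y \rangle$). Consequently the distinct primes appearing in the definition of $\varphi(x,y)$ are exactly $\{p_i : i \in J\}$, so $\varphi(x,y) = \prod_{i \in J}(p_i - 1)$.

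Third, I will expand each side coordinate-wise using the definitions of $1'_{x,y}$ and $1''_{x,y}$. By construction, $\langle x 1'_{x,y} \rangle$ has $\langle x_i \rangle$ as its $i$-th factor for $i \in J$ and the trivial group for $i \in I$, while $\langle y 1''_{x,y} \rangle$ has $\langle y_i \rangle$ for $i \in I$ and the trivial group for $i \in J$. Hence the left-hand side equals $\prod_{i \in J}|\langle x_i\rangle| \cdot \prod_{i \in I}|[y_i]|$, and the right-hand side equals $\prod_{i \in I}|[y_i]| \cdot \prod_{i \in J} |[y_i]|/(p_i-1)$. The $I$-factors cancel, so the claim reduces to $|\langle x_i \rangle| = |[y_i]|/(p_i - 1)$ for each $i \in J$.

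To finish, fix $i \in J$. Then $\langle x_i \rangle = \langle p_i y_i \rangle$, and $|\langle y_i \rangle| = p_i^{e_i}$ for some $e_i \ge 1$ (there exists $j$ with $b_{i,j} - c_{i,j} = 1$, forcing $y_{i,j} \neq 0$); hence $|\langle x_i \rangle| = p_i^{e_i - 1}$ and $|[y_i]| = p_i^{e_i - 1}(p_i - 1)$, confirming the equality. The main obstacle will be the bookkeeping in step two: making precise that the combinatorial conditions of Lemma~\ref{MuNonZeroCond} are captured exactly by the set $J$, and that the intersection $\bigcap_{i \in J}\langle p_i y\rangle$ coincides with $\langle \prod_{i \in J} p_i \cdot y \rangle$ so that the canonical presentation $\langle x \rangle = \langle q_1 \cdots q_r y\rangle$ has $\{q_1,\ldots,q_r\}=\{p_i:i\in J\}$; once this is in place, the remainder is routine coordinate-wise computation.
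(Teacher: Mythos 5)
Your proposal is correct and follows essentially the same route as the paper: both reduce to the primary decomposition $Z = Z_{n_1}\times\cdots\times Z_{n_k}$, identify the coordinates where $\gcd(x_i,n_i)\neq\gcd(y_i,n_i)$ as exactly the primes entering $\varphi(x,y)$, and verify the identity by explicit order and totient computations (the paper writes $|\langle y\rangle|=\prod p_i^{b_i}$ and compares global products, while you factor both sides and check the single-prime identity $|\langle x_i\rangle|=|[y_i]|/(p_i-1)$ for $i\in J$ — a cosmetic reorganization of the same calculation).
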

\begin{proof} Let $x'=x1'_{x,y}$ and $y'' = y 1''_{x,y} $.  We have the following two cases:\\	
\textbf{Case 1:} Assume that $\gcd (  x_i , n_i ) = \gcd ( y_i , n_i)$ for all $i =1,2,\ldots , k$. Therefore, $\langle x \rangle = \langle y \rangle $.  Then $ x' =(0,0,\ldots, 0)$ and $y''  = y$. The result follows from the fact that $\varphi(x,y) =1$. \\
\noindent \textbf{Case 2:}  Without loss of generality, assume that $\gcd (  x_i , n_i ) \neq \gcd ( y_i , n_i)$ for all $i \leq r$ and $\gcd (  x_i , n_i ) = \gcd ( y_i , n_i)$ for all $i > r$.  Using the fact that $\mu(x,y) \neq 0$, we get  $\langle x \rangle = \langle p_1p_2\ldots p_r y \rangle $. Let $| \langle y \rangle | = \prod_{i=1}^{k} p_i^{b_i} $ where $b_i \geq 0$ for all $i=1,2,\ldots , k$. Note that $b_i \geq 1$ for all $i=1,2,\ldots , r$. We have $| \langle x \rangle | = \prod_{i=1}^{r} p_i^{b_i-1}  \prod_{i=r+1}^{k} p_i^{b_i}$, $| \langle x' \rangle | = \prod_{i=1}^{r} p_i^{b_i-1} $, and $| \langle y'' \rangle | = \prod_{i=r+1}^{k} p_i^{b_i}$. It implies that $$
|[y]|=  \prod_{i=1 ~ \& ~ b_i \neq 0 }^{k} (p_i-1) p_i^{b_i-1} \mbox{ and } | [ y'' ] | = \prod_{\substack{ i=r+1 ~ \& ~ b_i \neq 0  }}^{k} (p_i-1) p_i^{b_i-1}.$$ The result follows from the fact that $\varphi(x,y) = \prod_{i=1}^{r} (p_i-1)$.
\end{proof}

%%%%%%%%%%%%%%%%%%%%%%%%%%%%%%%%%%%%%%%%%%%%%%%%%%%%%%%%%%%%

\begin{theorem}\label{MainResultProof} Let $Z$ be an abelian group and $ \alpha,  x  \in Z$. Then 
\begin{align}
C_{\alpha}(x) &= \mu( \bar{\alpha} , x)  ~ \frac{|[x]| }{ \varphi( \bar{\alpha} , x) }, \nonumber
\end{align} where $\bar{\alpha}$ is a generator of the cyclic group $ \langle \alpha \rangle ^{\perp} \cap \langle x \rangle$.
\end{theorem}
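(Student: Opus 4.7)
The plan is to start from the explicit expression for $C_\alpha(x)$ derived just before the statement, namely Equation~(\ref{SubFormulaEq1}):
$$C_\alpha(x) = \sum_{\substack{y \in \mathcal{A}(x) \\ y \in \langle \alpha \rangle^{\perp}}} |\langle y \rangle| \, \mu(y,x),$$
and to simplify it using the definition of $\bar{\alpha}$. Every $y \in \mathcal{A}(x)$ already lies in $\langle x \rangle$, so the extra restriction $y \in \langle \alpha \rangle^{\perp}$ reduces to $y \in \langle \alpha \rangle^{\perp} \cap \langle x \rangle = \langle \bar{\alpha} \rangle$, i.e.\ to $\langle y \rangle \subseteq \langle \bar{\alpha} \rangle$.

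Next I would parameterise the surviving terms by appealing to Lemma~\ref{MuNonZeroCond}. Let $N = |\langle x \rangle|$, let $P$ be the set of distinct prime divisors of $N$, and for $T \subseteq P$ set $P_T = \prod_{p \in T} p$. The condition $\mu(y,x) \neq 0$ picks out, for each $T \subseteq P$, a unique $y_T \in \mathcal{A}(x)$ with $\langle y_T \rangle = \langle P_T x \rangle$, $|\langle y_T \rangle| = N/P_T$, and $\mu(y_T,x) = (-1)^{|T|}$. Writing $m = |\langle \bar{\alpha} \rangle|$, the inclusion $\langle y_T \rangle \subseteq \langle \bar{\alpha} \rangle$ translates into the divisibility $(N/m) \mid P_T$, which forces $N/m$ to be squarefree.

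Then I would split into cases. If $N/m$ is not squarefree the sum is empty, so $C_\alpha(x) = 0$; applying Lemma~\ref{MuNonZeroCond} to the pair $(\bar{\alpha}, x)$ simultaneously gives $\mu(\bar{\alpha},x) = 0$, so the claimed identity holds trivially. Otherwise $N/m = P_{T_0}$ for a unique $T_0 \subseteq P$, the inclusion condition becomes $T_0 \subseteq T$, and parameterising $T = T_0 \sqcup S$ with $S \subseteq P \setminus T_0$ factorises the sum as
$$C_\alpha(x) = \frac{(-1)^{|T_0|} N}{P_{T_0}} \prod_{p \in P \setminus T_0} \Bigl(1 - \frac{1}{p}\Bigr).$$

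Finally I would match this with $\mu(\bar{\alpha},x) \cdot |[x]|/\varphi(\bar{\alpha},x)$. Since $\mu(\bar{\alpha},x) = (-1)^{|T_0|}$, $\varphi(\bar{\alpha},x) = \prod_{p \in T_0}(p-1)$, and $|[x]|$ equals the classical Euler totient $\varphi(N)$, the equality reduces to a short algebraic identity; alternatively one can plug the parameterisation into Lemma~\ref{NewResultHelpFinal}, which already packages $|[x]|/\varphi(\bar{\alpha},x)$ as $|\langle \bar{\alpha} \cdot 1'_{\bar{\alpha},x} \rangle| \cdot |[x \cdot 1''_{\bar{\alpha},x}]|$. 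The main obstacle is the reduction to cyclic subgroups of $\langle x \rangle$ — specifically, recognising that $\langle \alpha \rangle^{\perp} \cap \langle x \rangle$ is cyclic, so the character-theoretic condition can be replaced by the inclusion $\langle y \rangle \subseteq \langle \bar{\alpha} \rangle$; after that, everything is careful bookkeeping of prime factorisations.
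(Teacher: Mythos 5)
Your proposal is correct, but it follows a genuinely different route from the paper's proof. Both arguments start from Equation~(\ref{SubFormulaEq1}) and both replace the condition $y\in\langle\alpha\rangle^{\perp}$ by $\langle y\rangle\subseteq\langle\bar{\alpha}\rangle$ (using that $\mathcal{A}(x)\subseteq\langle x\rangle$), but from there they diverge. The paper stays inside the general abelian-group formalism: it splits the coordinates of $Z$ according to whether $\gcd(\bar{\alpha}_i,n_i)$ equals $\gcd(x_i,n_i)$, writes $\bar{\alpha}=\bar{\alpha}'+\bar{\alpha}''$ and $x=x'+x''$ via the indicators $1'_{\bar{\alpha},x}$ and $1''_{\bar{\alpha},x}$, factors the sum over $\mathcal{A}(x')\times\mathcal{A}(x'')$, collapses the $x'$-part to the single term $|\langle\bar{\alpha}'\rangle|\,\mu(\bar{\alpha},x)$ and evaluates the $x''$-part as $|[x'']|$ via Lemma~\ref{GroupSizeEqEquiLema}, finally invoking Lemma~\ref{NewResultHelpFinal} to repackage $|\langle\bar{\alpha}'\rangle|\,|[x'']|$ as $|[x]|/\varphi(\bar{\alpha},x)$. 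You instead exploit the fact that the entire sum lives inside the single cyclic group $\langle x\rangle$ of order $N$: since every quantity in the summand depends only on the divisor $d=|\langle y\rangle|$ of $N$ and $\mu(y,x)=\dot{\mu}(N/d)$, the sum becomes $\sum_{(N/m)\mid e\mid N}(N/e)\,\dot{\mu}(e)$, which you evaluate by the standard squarefree-support expansion into $(-1)^{|T_0|}(N/P_{T_0})\prod_{p\in P\setminus T_0}(1-1/p)$ and then match against $\dot{\mu}(N/m)\,\varphi(N)/\prod_{p\in T_0}(p-1)$ by the product formula for the totient (your degenerate case, $N/m$ not squarefree, correctly gives $0=\mu(\bar{\alpha},x)$ on both sides, and the worry you flag about $\langle\alpha\rangle^{\perp}\cap\langle x\rangle$ being cyclic is automatic, as it is a subgroup of the cyclic group $\langle x\rangle$). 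Your computation is shorter and more elementary --- it essentially reruns the classical Ramanujan-sum evaluation inside $\langle x\rangle$ and needs none of Lemmas~\ref{MuZeroCond}, \ref{GroupSizeEqEquiLema}, or \ref{NewResultHelpFinal} --- whereas the paper's decomposition keeps the argument phrased in terms of the group-theoretic invariants $\mu(\cdot,\cdot)$, $\varphi(\cdot,\cdot)$, $|[\cdot]|$ and produces intermediate lemmas that are reusable for general abelian groups.
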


\begin{proof} Assume that $\langle \bar{\alpha} \rangle = \langle \alpha \rangle ^{\perp} \cap \langle x \rangle$. The Equation~(\ref{SubFormulaEq1}) implies 
\begin{align}
C_{\alpha}(x) = \sum_{ \substack{ y \in \mathcal{A}( x ) \\ y \in \langle \bar{\alpha} \rangle  } }  |\langle y \rangle|  ~ \mu(y,x). \label{SubFormulaEq1134}
\end{align}
If $y \in \langle \bar{\alpha} \rangle$ then $y = y_1 \bar{\alpha} $ for some $y_1 \in \mathbb{N} $. Equation~(\ref{SubFormulaEq1134}) implies

\begin{align}
C_{\alpha}(x) &= \sum_{ \substack{ y \in \mathcal{A}( x ) \\ y = y_1 \bar{\alpha}  } }  |\langle y  \rangle| ~ \mu( y ,x). \label{SubFormulaEq112} 
\end{align}

If $\mu( \bar{\alpha}, x) = 0$ then by Lemma~\ref{MuZeroCond} $\mu( t  \bar{\alpha} ,x) = 0$ for all  $t \in \mathbb{N}$. Equation~(\ref{SubFormulaEq112}) implies that $C_{\alpha}(x) =0$. So, assume that $\mu( \bar{\alpha} ,x) \neq 0$. Define $\bar{\alpha}' = \bar{\alpha} 1_{\bar{\alpha},x}'$, $\bar{\alpha}'' = \bar{\alpha} 1_{\bar{\alpha},x}''$, $x' = x 1_{\bar{\alpha},x}'$, and $x'' = x 1_{\bar{\alpha},x}''$. Note that $\bar{\alpha} = \bar{\alpha}' + \bar{\alpha}''$ and $x=x' + x''$. We get 

\begin{align}
C_{\alpha}(x) &= \sum_{ \substack{ y \in \mathcal{A}( x' + x'' ) \\ y = y_1 (\bar{\alpha}'+ \bar{\alpha}'')  } }  |\langle  y  \rangle| ~ \mu( y  ,x' + x'') \nonumber \\ 
&= \sum_{ \substack{ u \in \mathcal{A}( x' ) ~ \& ~ v \in \mathcal{A}( x'' ) \\ u = y_1 \bar{\alpha}' ~ \& ~ v = y_1  \bar{\alpha}''   } }    |\langle  u+v  \rangle| ~ \mu( u+v  ,x' + x'') \nonumber \\
&= \sum_{ \substack{ u \in \mathcal{A}( x' ) ~ \& ~ v \in \mathcal{A}( x'' ) \\ u = y_1 \bar{\alpha}' ~ \& ~ v = y_1  \bar{\alpha}''   } }   |\langle  u \rangle| ~  |\langle  v  \rangle| ~ \mu(  u  ,x') ~ \mu( v  ,x''). \label{SubFormulaEq113} 
\end{align}

Assume that $u \in \mathcal{A}( x' )$, $ u = y_1 \bar{\alpha}' $, and $\mu( u ,x') \neq 0$. Let $i\in \{ 1, 2, \ldots , k \}$. We have $\gcd (\bar{\alpha}'_{i,j} , n_i) > \gcd( x'_{i,j} , n_i )$ for some $j$ whenever $\bar{\alpha}'_{i} \neq (0,0, \ldots , 0)$. Lemma~\ref{MuNonZeroCond} implies that $\gcd( y_1, n_i )=1 $ whenever $\bar{\alpha}'_{i} \neq (0,0,\ldots , 0)$. It implies that $y_1$ is co-prime to the order of $\bar{\alpha}'$, and so $ \langle u  \rangle =  \langle  \bar{\alpha}' \rangle $. We get $ |\langle u  \rangle| =  |\langle  \bar{\alpha}' \rangle| $ and $ \mu( u, x') =\mu( \bar{\alpha}' ,x') $. Using the fact that $\mu( \bar{\alpha}' ,x') = \mu( \bar{\alpha} ,x) $, Equation~(\ref{SubFormulaEq113}) implies that 
\begin{align}
C_{\alpha}(x) &=  \mu( \bar{\alpha}  ,x) ~  |\langle  \bar{\alpha}' \rangle|    \sum_{ \substack{ v \in \mathcal{A}( x'' ) \\ v = y_1 \bar{\alpha}''  } }   |\langle  v  \rangle| ~  \mu( v  ,x''). \label{SubFormulaEq114} 
\end{align}
\noindent We have $\langle \bar{\alpha}'' \rangle = \langle x'' \rangle$. It implies
\begin{align}
  \sum_{ \substack{ v \in \mathcal{A}( x'' ) \\ v = y_1 \bar{\alpha}''  } }   |\langle v  \rangle|  ~ \mu( v  ,x'')  =    \sum_{ v \in \mathcal{A}( x'' )  }   |\langle v  \rangle|  ~ \mu( v  ,x'')     = |[x'']| .   \label{SubFormulaEq115} 
\end{align}
Here the equality holds from Lemma~\ref{GroupSizeEqEquiLema}. Now, substitute  the Equation~(\ref{SubFormulaEq115}) in Equation~(\ref{SubFormulaEq114}), we get 
\begin{align}
C_{\alpha}(x) &= \mu( \bar{\alpha} , x) ~ |\langle \bar{\alpha}' \rangle | ~  |[x'']|. \nonumber
\end{align} 
The result follows from Lemma~\ref{NewResultHelpFinal}.
\end{proof}

%%%%%%%%%%%%%%%%%%%%%%%%%%%%%%%%%%%%%%%%%%%%%%%%%%%%%%%%%%%%%%%%%%%%%%%%%%%%%%%%%%%%%%
%
%\bibliographystyle{plain}
%\bibliography{SeprateBibFile}

\begin{thebibliography}{1}

\bibitem{babai1979spectra}
L.~Babai.
\newblock Spectra of Cayley graphs.
\newblock {\em Journal of Combinatorial Theory, Series B}, 27(2):180--189,
  1979.

\bibitem{bridges1982rational}
W.G. Bridges and R.A. Mena.
\newblock Rational g-matrices with rational eigenvalues.
\newblock {\em Journal of Combinatorial Theory, Series A}, 32(2):264--280,
  1982.

\bibitem{liu2022eigenvalues}
X. Liu and S. Zhou.
\newblock Eigenvalues of Cayley graphs.
\newblock {\em The Electronic Journal of Combinatorics}, P2--9, 2022.

\bibitem{ramanujan1918certain}
S.~Ramanujan.
\newblock On certain trigonometrical sums and their applications in the theory
  of numbers.
\newblock {\em Transactions of the Cambridge Philosophical Society},
  22(13):259--276, 1918.

\end{thebibliography}

\end{document}